\newtheorem{theo}{Theorem}[section]
\newtheorem{lemma}[theo]{Lemma}
\newtheorem{prop}[theo]{Proposition}
\newtheorem{cor}[theo]{Corollary}
\theoremstyle{definition}
\newcommand{\f}{\phi}
\newcommand{\spec}{\operatorname{Spec}}
\newcommand{\Aut}{\operatorname{Aut}}
\newcommand{\A}{\mathbb{A}}
\title{A remark on torsors for affine group schemes}
\author{Michael Wibmer}
\address{Michael Wibmer, Institute of Analysis and Number Therory, Graz University of Technology, Kopernikusgasse~24, 8010 Graz, Austria, \url{https://sites.google.com/view/wibmer}}
\email{wibmer@math.tugraz.at}
\thanks{This work was supported by the NSF grants DMS-1760212, DMS-1760413, DMS-1760448 and the Lise Meitner grant M 2582-N32 of the Austrian Science Fund FWF}
\subjclass[2020]{14L15, 14L30, 14M17}
\date{\today}
\begin{document}
\maketitle

\begin{abstract}
We present an elementary proof of the fact that every torsor for an affine group scheme over an algebraically closed field is trivial. This is related to the uniqueness of fibre functors on neutral tannakian categories.

\end{abstract}

\section{Introduction}

Clearly, every torsor for an affine group scheme of finite type over an algebraically closed field is trivial. However, it is not clear if this also holds without the finite type assumption. This question was raised in \cite{Deninger:ARemarkOnTheStructureOfTorsorsUnderAnaffineGroupScheme}, where some partial positive results were obtained: When the affine group scheme $G$ is written as a projective limit of affine group schemes of finite type over an index set $I$, then all $G$-torsors are trivial in the following two cases: $I$ is countable or the cardinality of the algebraically closed base field is strictly larger than the cardinality of $I$. The main result discussed in this short note is the following:

\begin{theo} \label{theo: main intro}
	Let $G$ be an affine group scheme over an algebraically closed field $k$ and let $X$ be a $G$-torsor. Then $X$ is trivial, i.e., $X(k)\neq\emptyset$.	
\end{theo}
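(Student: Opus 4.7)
My proof plan is based on Zorn's lemma, reducing everything to the trivial finite-type case at each incremental step. Write $G=\spec(H)$ with $H$ a $k$-Hopf algebra and $X=\spec(A)$, where $A$ is an $H$-comodule algebra satisfying the torsor condition $A\otimes_k A\cong A\otimes_k H$. Every sub-Hopf algebra $H'\subseteq H$ corresponds to a normal closed subgroup $N_{H'}=\ker(G\twoheadrightarrow\spec(H'))$ of $G$, and the quotient $X/N_{H'}$ is a $\spec(H')$-torsor whose coordinate ring I will denote $A^{H'}$ (the $N_{H'}$-invariants). Note that $A^k=k$ and $A^H=A$.

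I would then consider the poset $P$ of pairs $(H',\phi)$ with $H'\subseteq H$ a sub-Hopf algebra and $\phi\colon A^{H'}\to k$ a $k$-algebra homomorphism, ordered by $(H_1,\phi_1)\leq(H_2,\phi_2)$ iff $H_1\subseteq H_2$ and $\phi_2$ restricts to $\phi_1$ on $A^{H_1}$. The pair $(k,\id_k)$ lies in $P$. For a chain $\{(H'_\alpha,\phi_\alpha)\}$, I take $H':=\bigcup_\alpha H'_\alpha$, which is again a sub-Hopf algebra; the identification $A^{H'}=\bigcup_\alpha A^{H'_\alpha}$ lets me patch together the $\phi_\alpha$ into an upper bound. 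By Zorn's lemma, $P$ contains a maximal element $(H_0,\phi_0)$, and the goal is to show $H_0=H$, so that $\phi_0\colon A\to k$ is the desired $k$-point.

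Suppose towards a contradiction that $H_0\subsetneq H$, and pick $h\in H\setminus H_0$. Since $h$ generates a finite-dimensional subcomodule $V$ of $H$, the sub-Hopf algebra $H_1$ generated by $H_0\cup V$ satisfies $H_0\subsetneq H_1$ and is finitely generated over $H_0$ as a $k$-algebra. The finite-type affine group scheme $M:=\ker(\spec(H_1)\to\spec(H_0))$ has Hopf algebra $H_1\otimes_{H_0}k$, which is finitely generated over $k$. The induced morphism $X/N_{H_1}\to X/N_{H_0}$ is an $M$-torsor; pulling back along the $k$-point $\phi_0$ gives an $M$-torsor over $\spec(k)$. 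Because $M$ is of finite type over an algebraically closed field, this pullback torsor is trivial, producing a $k$-point $\phi_1\colon A^{H_1}\to k$ extending $\phi_0$. Then $(H_1,\phi_1)>(H_0,\phi_0)$, contradicting maximality.

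The main obstacles are the Hopf-algebraic bookkeeping: verifying that sub-Hopf algebras of $H$ correspond to normal closed subgroups and to the ``invariant'' subrings $A^{H'}$ as above; that $A^{\bigcup H'_\alpha}=\bigcup A^{H'_\alpha}$ so chains in $P$ really have upper bounds; and that $X/N_{H_1}\to X/N_{H_0}$ is genuinely an $M$-torsor with $M$ of finite type. These are all standard facts about quotients of affine group schemes, essentially reducing to the statement (checked after base-changing $X$ to a trivialization) that the analogous facts hold for $G$ itself in place of $X$. Once these technicalities are in place, the argument above is essentially a one-step reduction to the finite-type case, iterated transfinitely via Zorn.
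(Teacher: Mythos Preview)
Your Zorn's lemma approach is correct and genuinely different from the paper's. The paper also writes $X$ as a projective limit of finite-type quotient torsors $X_N$ (its Proposition~\ref{prop: torsor is projective limit} supplies most of the ``bookkeeping'' you flag: that $k[X]^N$ is a $G/N$-torsor, and implicitly the characterization $f\in k[X]^N\iff \rho(f)\in k[X]\otimes k[G]^N$, from which your chain identity $A^{\bigcup H'_\alpha}=\bigcup A^{H'_\alpha}$ follows once one knows $k[G]^{\bigcap N_\alpha}=\bigcup k[G]^{N_\alpha}$ via the Takeuchi correspondence). But from there the paper proceeds topologically rather than by transfinite extension: it introduces an \emph{orbit topology} on each $X_N(k)$, whose closed sets are finite unions of orbits $x.H(k)$ for closed subgroups $H\leq G$, checks that this makes each $X_N(k)$ a non-empty compact T1 space with continuous closed transition maps, and then invokes a Hochschild--Mostow criterion (Lemma~\ref{lemma: projective limit nonempty}) guaranteeing non-emptiness of such projective limits. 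Your argument sidesteps this topological layer entirely; the chain step in Zorn is precisely where you absorb the well-known failure of the naive ``surjective transition maps imply non-empty limit'' argument, and the successor step is a single finite-type torsor. Both proofs ultimately rest on the same quotient-torsor construction; the paper's route yields the orbit topology as a byproduct, while yours is more purely algebraic and reads as the natural transfinite extension of Deninger's countable-index argument.
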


As an application one obtains:

\begin{cor} \label{cor: intro uniquenes of fibre functor}
	Any two neutral fibre functors on a neutral tannakian category over an algebraically closed field are isomorphic.
\end{cor}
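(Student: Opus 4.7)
The plan is to deduce the corollary from Theorem~\ref{theo: main intro} by invoking the standard representability result from neutral tannakian theory. Let $\CC$ be a neutral tannakian category over an algebraically closed field $k$, and let $\omega_1,\omega_2\colon \CC\to \mathsf{Vec}_k$ be two neutral fibre functors.

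First, I would set $G=\underline{\Aut}^\otimes(\omega_1)$, the affine group scheme over $k$ representing the functor that assigns to a $k$-algebra $R$ the group of tensor automorphisms of $\omega_1\otimes_k R$. This is part of the standard machinery of neutral tannakian categories (as developed, e.g., in Deligne--Milne). Then I would consider the functor
\[
\underline{\Isom}^\otimes(\omega_1,\omega_2)\colon R\longmapsto \Hom^\otimes(\omega_1\otimes_k R,\ \omega_2\otimes_k R),
\]
where the morphisms are isomorphisms of tensor functors. A central fact in tannakian theory asserts that this functor is representable by an affine $k$-scheme $X$, and that $X$ is naturally a (right) $G$-torsor: $G$ acts by pre-composition, and this action is simply transitive on $R$-points whenever the set of $R$-points is non-empty, which one checks using faithfully flat descent to reduce to the universal case.

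Once $X$ is identified as an affine $G$-torsor over $k$, Theorem~\ref{theo: main intro} applies directly and yields $X(k)\neq\emptyset$. A $k$-point of $X$ is, by definition of $\underline{\Isom}^\otimes(\omega_1,\omega_2)$, precisely a tensor isomorphism $\omega_1\xrightarrow{\sim}\omega_2$, which is what we want to produce.

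The main (and really the only) obstacle is that none of the representability statements above are trivial, so in the write-up I would either quote them from the literature or sketch the construction of $X$: one expresses $\omega_1$ and $\omega_2$ as filtered colimits over the finitely $\otimes$-generated subcategories $\CC_0\subseteq \CC$, obtains corresponding pro-representing Hopf algebras, and shows that on each finite piece $\underline{\Isom}^\otimes(\omega_1|_{\CC_0},\omega_2|_{\CC_0})$ is representable by an affine scheme of finite type; passing to the limit gives the affine scheme $X$. All the real work is packed into Theorem~\ref{theo: main intro}, which disposes of the case without finite-type hypotheses and thereby removes the only remaining difficulty in comparing fibre functors over an algebraically closed base.
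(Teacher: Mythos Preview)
Your proposal is correct and follows essentially the same route as the paper: define $G=\underline{\Aut}^\otimes(\omega_1)$, invoke the tannakian fact (Deligne--Milne, Theorem~3.2) that $\underline{\operatorname{Isom}}^\otimes(\omega_1,\omega_2)$ is a $G$-torsor, and then apply Theorem~\ref{theo: main intro} to produce a $k$-point. The paper simply cites the representability/torsor statement rather than sketching its proof, but the logical structure is identical.
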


We note that a different proof of Corollary \ref{cor: intro uniquenes of fibre functor} is outlined in \cite{Deligne:LetterToAVasiu}. In fact, Corollary \ref{cor: intro uniquenes of fibre functor} implies Theorem \ref{theo: main intro}, because by \cite[Theorem 3.2 (b)]{DeligneMilne:TannakianCategories}, for $\omega$ the forgetful functor on the category $\operatorname{Rep}(G)$ of finite dimensional $k$-linear representations of $G$, the functor $\eta\rightsquigarrow \underline{\operatorname{Isom}}^\otimes(\omega,\eta)$ is an equivalence of categories between the category of neutral fibre functors on $\operatorname{Rep}(G)$ and the category of $G$-torsors. Cf. Remark 6.4.4 in \cite{Coulembier:TannakianCategoriesInPositiveCharacteristic}.

In this note we present an elementary proof of Theorem \ref{theo: main intro} that relies on a general principle guaranteeing the non-emptiness of a projective limit. We also show that (over an arbitrary base field) any $G$-torsor can be written as a projective limit of affine $G$-spaces of finite type that are torsors for quotient groups of $G$ (Proposition~\ref{prop: torsor is projective limit}).

\section{Proofs}

We begin by fixing our notation. Throughout $k$ is a field; our base field. All schemes (including group schemes), products, tensor products and morphisms are assumed to be over $k$ unless the contrary is indicated.

We will often identify a scheme $X$ with its functor of points $R\rightsquigarrow X(R)$ from the category of $k$-algebras to the category of sets. For an affine scheme $X$ we denote with $k[X]$ its $k$-algebra of global sections.

Let $G$ be an affine group scheme. By a \emph{closed subgroup} of $G$ we mean a closed subgroup scheme of $G$. A \emph{$G$-space} is a scheme $X$ together with a $G$-action (from the right) $X\times G\to X,\ (x,g)\mapsto x.g$. A morphism of $G$-spaces is a $G$-equivariant morphism of schemes.
A \emph{$G$-torsor} is a $G$-space $X$ such that $X\times G\to X\times X,\ (x,g)\mapsto (x,x.g)$ is an isomorphism. For an affine $G$-space $X$ the \emph{centralizer} $C_G(X)$ is defined by 
$$C_G(X)(R)=\{g\in G(R)|\ x.g=x \ \forall \ x\in X(R') \text{ and all $R$-algebras $R'$}\}.$$ 
Then $C_G(X)$ is a normal closed subgroup of $G$ (\cite[Chapter II, Theorem 3.6 c)]{DemazureGabriel:GroupesAlgebriques}) and $X$ is a $G/C_G(X)$-space. Following \cite[Def. 5.5.]{Milne:AlgebraicGroupsTheTheoryOfGroupSchemesOfFiniteTypeOverAField}
we call a morphism $G\to H$ of affine group schemes a \emph{quotient map} if it is faithfully flat (equivalently, the dual map $k[H]\to k[G]$ is injective \cite[Section 14]{Waterhouse:IntroductiontoAffineGroupSchemes}).

\medskip

Let us first sketch the proof of Theorem \ref{theo: main intro}. We are given a $G$-torsor $X$ and we would like to show that $X(k)\neq \emptyset$. We can write $X$ as a projective limit $X=\varprojlim X_i$ of $G$-spaces $X_i$ of finite type. So $X(k)=\varprojlim X_i(k)$.  Since we assume $k$ to be algebraically closed, the $X_i(k)$'s are non-empty. However, a projective limit of non-empty sets may well be empty. A standard condition to guarantee the non-emptyness of a projective limit of sets is that the sets are compact Hausdorff topological spaces with continuous transition maps (\cite[Prop. 1.1.4]{RibesZalesskii:ProfiniteGroups}.) Unfortunately, the $X_i(k)$'s equipped with the Zariski topology are not Hausdorff and so another approach is needed. The following lemma (see \cite[Prop. 2.7]{HochschildMostow:RepresentationsAndRepresentativeFunctionsOfLieGroups} or \cite[Theorem 2.1]{Stewart:ConjugacyTheoremsForAClassOfLocallyFiniteLieAlgebras}) provides a more refined criterion to show that a projective limit is non-empty.

\begin{lemma} \label{lemma: projective limit nonempty}
	Let $I$ be a directed set and let $((X_i)_{i\in I}, (\varphi_{i,j})_{i\leq j})$ be a projective system of topological spaces. If the $X_i$'s are non-empty compact T1 spaces and the $\varphi_{i,j}$'s are closed maps, then $\varprojlim X_i$ is non-empty.
\end{lemma}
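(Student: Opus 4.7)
The plan is to argue in two stages: first reduce to the case where every transition map is surjective, then extract a thread by applying Zorn's lemma to coherent families of closed subsets.

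For the reduction, set $M_i=\bigcap_{j\geq i}\varphi_{i,j}(X_j)$ for each $i\in I$. Each image $\varphi_{i,j}(X_j)$ is closed in $X_i$ since $\varphi_{i,j}$ is a closed map, and the family has the finite intersection property: for $j_1,\dots,j_n\geq i$ directedness provides $m\geq j_1,\dots,j_n$, and the cocycle identity gives $\varphi_{i,m}(X_m)\subseteq\bigcap_k\varphi_{i,j_k}(X_{j_k})$, which is non-empty. Compactness of $X_i$ forces $M_i\neq\emptyset$. A second compactness argument shows $\varphi_{i,j}(M_j)=M_i$ for $i\leq j$: for $x_i\in M_i$, the sets $\varphi_{j,l}(X_l)\cap\varphi_{i,j}^{-1}(x_i)$, indexed by $l\geq j$, are closed (here the T1 hypothesis enters, to see that $\varphi_{i,j}^{-1}(x_i)$ is closed), non-empty (another cocycle chase), and directed by reverse inclusion, so FIP plus compactness yields a preimage of $x_i$ lying in $M_j$. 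Replacing each $X_i$ by $M_i$ I may assume that every $\varphi_{i,j}$ is surjective.

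In stage two, call a family $(C_i)_{i\in I}$ of non-empty closed subsets $C_i\subseteq X_i$ \emph{coherent} if $\varphi_{i,j}(C_j)=C_i$ for all $i\leq j$, and order such families by reverse inclusion. The full system $(X_i)$ is coherent, and for any chain $(C_i^\alpha)_\alpha$ the pointwise intersection $(\bigcap_\alpha C_i^\alpha)_i$ is again coherent: non-emptiness of each $\bigcap_\alpha C_i^\alpha$ comes from FIP plus compactness, and the surjectivity identity $\varphi_{i,j}(\bigcap_\alpha C_j^\alpha)=\bigcap_\alpha C_i^\alpha$ is established by the same trick as in stage one, again via the closedness of one-point fibres. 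Zorn's lemma therefore produces a coherent family $(D_i)$ admitting no strict coherent refinement.

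Finally I claim each $D_i$ is a singleton, which finishes the proof since then $(z_i)_{i\in I}$ with $\{z_i\}=D_i$ lies in $\varprojlim X_i$. Suppose some $D_{i_0}$ contains at least two points and fix $y\in D_{i_0}$. For $i\geq i_0$ set $D_i'=D_i\cap\varphi_{i_0,i}^{-1}(y)$, and for general $i$ set $D_i'=\varphi_{i,m}(D_m')$ for any $m\geq i,i_0$. Using that the restrictions $\varphi_{m,m'}\colon D_{m'}\twoheadrightarrow D_m$ are surjective one checks that $D_i'$ is independent of the choice of $m$, closed and non-empty, and that $(D_i')$ is coherent; but $D_{i_0}'=\{y\}\subsetneq D_{i_0}$, a strict refinement contradicting the choice of $(D_i)$. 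I expect the most delicate step to be the preservation of the surjectivity identity under chain intersections in stage two, which hinges on the closedness of the fibres of the transition maps — exactly where the T1 hypothesis is indispensable.
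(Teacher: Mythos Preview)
The paper does not supply its own proof of this lemma; it simply quotes the statement and refers the reader to \cite[Prop.~2.7]{HochschildMostow:RepresentationsAndRepresentativeFunctionsOfLieGroups} and \cite[Theorem~2.1]{Stewart:ConjugacyTheoremsForAClassOfLocallyFiniteLieAlgebras}. So there is no in-paper argument to compare against.

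Your two-stage argument is correct and is in fact the classical proof one finds in those references: first stabilise the system by passing to the eventual images $M_i=\bigcap_{j\ge i}\varphi_{i,j}(X_j)$, then run Zorn on coherent closed subfamilies and show a minimal one consists of singletons. All the compactness/FIP steps and the well-definedness of the refined family $(D_i')$ check out as you describe.

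One small point worth making explicit: when you invoke T1 to conclude that $\varphi_{i,j}^{-1}(x_i)$ is closed, you are also using that the transition maps are \emph{continuous}. This is implicit in the phrase ``projective system of topological spaces'' (so the hypothesis ``closed maps'' is an additional assumption on top of continuity), but since the lemma statement only names closedness explicitly, it is worth saying once that continuity is part of the standing conventions. Without it, preimages of points need not be closed and both the surjectivity step in stage one and the chain-intersection step in stage two would break down.
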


Returning to the above discussion, the $X_i(k)$'s are compact T1 spaces with respect to the Zariski topology. However, the transition maps need not be closed and so Lemma \ref{lemma: projective limit nonempty} cannot be applied directly. A different topology is needed. We first show that the $X_i$'s can be chosen in such a way that $X_i$ is a $G/C_G(X_i)$-torsor. Using this property, we show that the subsets of $X_i(k)$ that are finite unions of orbits of the form $x.H(k)$ with $x\in X_i(k)$ and $H$ a closed subgroup of $G$, are the closed subsets of a topology on $X_i(k)$; the \emph{orbit topology}. With respect to the orbit topology $X_i(k)$ is a compact T1 space and the transition maps are continuous and closed. Thus Lemma \ref{lemma: projective limit nonempty} applied to the projective system of the $X_i(k)$'s equipped with the orbit topology yields Theorem~\ref{theo: main intro}.

\medskip

To make the above sketch precise, we will use the action of $G$ on $k[X]$. Let $G$ be an affine group scheme and $X$ an affine $G$-space. The $G$-action $X\times G\to X$ induces a functorial (left) action of $G$ on $k[X]$. For any $k$-algebra $R$, the group $G(R)$ acts on $k[G]\otimes R$ by $R$-algebra automorphisms. Identifying $k[X]\otimes R$ with the set of morphisms from $X_R$ to $\A^1_R$, the action of $g\in G(R)$ on $f\in k[X]\otimes R$ is given by $g(f)(x)=f(x.g)$ for $x\in X(R')$ and any $R$-algebra $R'$. The invariant ring under this action is $$k[X]^G=\{f\in k[X]|\ g(f\otimes 1)=f\otimes 1 \ \forall\ g\in G(R) \text{ and any $k$-algebra $R$}\}.$$
It is a $k$-subalgebra of $k[X]$. Note that for a normal closed subgroup $N$ of an affine group scheme $G$ acting via right-multiplication on $G$, we have $k[G]^N=k[G/N]$. See e.g. \cite[Section 16.3]{Waterhouse:IntroductiontoAffineGroupSchemes}.

\begin{lemma} \label{lemma: diagonal invariants}
	Let $X\times G\to X$ and $Y\times H\to Y$ be actions of affine group schemes on affine schemes. With respect to the diagonal action of $G\times H$ on $X\times Y$ we have
	$$k[X\times Y]^{G\times H}=k[X]^G\otimes k[Y]^H.$$
\end{lemma}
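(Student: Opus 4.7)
The plan is to reduce the two-sided $(G\times H)$-invariance to two one-sided invariances, under $G\times\{e\}$ and under $\{e\}\times H$ separately. The key intermediate statement to establish is the unilateral version
\[ k[X\times Y]^{G\times\{e\}} = k[X]^G\otimes k[Y], \]
and its symmetric counterpart with the two factors interchanged. Any $f\in k[X\times Y]^{G\times H}$ is automatically invariant under the closed subgroups $G\times\{e\}$ and $\{e\}\times H$ of $G\times H$, so once the two one-sided statements are proved, applying them in succession will identify the full invariant ring with $k[X]^G\otimes k[Y]^H$.

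For the one-sided statement I would write $f\in k[X]\otimes k[Y]$ in the form $f=\sum_i f_i\otimes g_i$ with the $g_i$ chosen $k$-linearly independent. Under the diagonal action, an element $(g,e)\in (G\times H)(R)$ acts as $g$ on the first tensor factor and trivially on the second; invariance of $f$ under every such $(g,e)$, for every $k$-algebra $R$, translates after tensoring with $R$ into the identity
\[ \sum_i \bigl(g(f_i\otimes 1) - f_i\otimes 1\bigr)\otimes (g_i\otimes 1)=0 \]
in $(k[X]\otimes R)\otimes_R (k[Y]\otimes R)$. Since $k$ is a field, every $k$-module is flat, so the $k$-linear independence of the $g_i$ persists as $R$-linear independence of the $g_i\otimes 1$ in $k[Y]\otimes R$. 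Hence each coefficient $g(f_i\otimes 1)-f_i\otimes 1$ vanishes, which means $f_i\in k[X]^G$.

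To conclude, take $f\in k[X\times Y]^{G\times H}$. The unilateral result (applied to $G\times\{e\}$) gives a presentation $f=\sum_i f_i\otimes g_i$ with $f_i\in k[X]^G$ and $g_i$ linearly independent in $k[Y]$. We may re-expand this sum picking a basis on the other side: using the linear independence of the $f_i$, the symmetric unilateral statement applied to $\{e\}\times H$ then forces $g_i\in k[Y]^H$. The reverse inclusion $k[X]^G\otimes k[Y]^H\subseteq k[X\times Y]^{G\times H}$ is immediate from the definition of the diagonal action. The only even mildly delicate point in the argument is the persistence of linear independence under base change to an arbitrary $R$, and this is a triviality over a field.
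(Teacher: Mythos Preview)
Your proof is correct and follows essentially the same route as the paper. Both arguments reduce to acting by elements of the form $(g,1)$ and $(1,h)$ separately: one writes $f=\sum a_i\otimes b_i$ with one family $k$-linearly independent, observes that this independence survives base change to any $R$ (flatness over a field), and concludes that the other family lies in the appropriate invariant subring; then one swaps the roles of the two factors. Your packaging of this step as a standalone ``unilateral'' identity $k[X\times Y]^{G\times\{e\}}=k[X]^G\otimes k[Y]$ is a slightly more structured presentation, but the computation is identical to the paper's.
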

\begin{proof}
	Note that for a $k$-algebra $R$ and $g\in G(R)$, $h\in H(R)$, the action of $(g,h)$ is given by $$(g,h)\colon k[X\times Y]\otimes R=(k[X]\otimes R)\otimes_R (k[Y]\otimes R)\xrightarrow{g\otimes h} (k[X]\otimes R)\otimes_R (k[Y]\otimes R)=k[X\times Y]\otimes R.$$ So the inclusion 
	$k[X]^G\otimes k[Y]^H\subseteq k[X\times Y]^{G\times H}$ is clear.
	
	Conversely, assume that $\sum a_i\otimes b_i\in (k[X]\otimes k[Y])^{G\times H}$. We may assume that the $a_i$'s are $k$-linearly independent. For any $k$-algebra $R$ and $h\in H(R)$ we have
	$$(1,h)\big(\sum a_i\otimes b_i\otimes 1\big)=\sum a_i\otimes h(b_i\otimes 1)=\sum a_i\otimes b_i\otimes 1\in k[X]\otimes k[Y]\otimes R$$
	As the $a_i$'s are $k$-linearly independent we can conclude that $h(b_i\otimes 1)=b_i\otimes 1$, i.e., $b_i\in k[Y]^H$. 
	
	Now, assuming that the $b_i$'s are $k$-linearly independent, a similar argument shows that the $a_i$'s must lie in $k[X]^G$. Thus $\sum a_i\otimes b_i\in k[X]^G\otimes k[Y]^H$.
\end{proof}

It is well known (see e.g. \cite[Section 3.3]{Waterhouse:IntroductiontoAffineGroupSchemes}) that every affine group scheme is a projective limit of affine algebraic groups. The following proposition shows that a similar statement is true for torsors. 

\begin{prop} \label{prop: torsor is projective limit}
	Let $G$ be an affine group scheme and let $X$ be an affine $G$-torsor. Then $X$ can be written as a projective limit $X=\varprojlim_{i\in I} X_i$ of affine $G$-spaces $X_i$ of finite type such that every $X_i$ is a $G/C_G(X_i)$-torsor.
\end{prop}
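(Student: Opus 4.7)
The plan is to use the standard decomposition of the comodule algebra $k[X]$ into finite-dimensional $G$-subcomodules, combined with a two-stage invariants argument based on Lemma \ref{lemma: diagonal invariants}.

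I will first write $k[X] = \bigcup_\alpha M_\alpha$ as a directed union of finite-dimensional $G$-subcomodules (directed since the sum of two subcomodules is a subcomodule). For each $\alpha$, the representation $G \to \Gl(M_\alpha)$ has kernel $N_\alpha$, a closed normal subgroup of $G$ whose quotient $G_\alpha := G/N_\alpha$ is an affine algebraic group (a closed subgroup of $\Gl(M_\alpha)$). Set $X_\alpha := \spec k[X]^{N_\alpha}$, which is naturally a $G_\alpha$-space. Since $M_\alpha \subseteq k[X]^{N_\alpha}$, we obtain $k[X] = \bigcup_\alpha k[X]^{N_\alpha}$, so $\{X_\alpha\}$ forms a cofiltered inverse system with $X = \varprojlim_\alpha X_\alpha$.

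The heart of the argument is to show that each $X_\alpha$ is a $G_\alpha$-torsor. The plan is to exploit the $G\times G$-equivariance of the torsor iso $\mu\colon X \times G \to X \times X$, $(x,g) \mapsto (x, xg)$, for the actions $(h_1, h_2) \cdot (x, g) = (xh_1, h_1^{-1} g h_2)$ on the source and $(h_1, h_2) \cdot (x, y) = (xh_1, yh_2)$ on the target. Dualizing and restricting to $N_\alpha \times N_\alpha$, I would take invariants in two stages. First, $1 \times N_\alpha$ acts diagonally on both sides (trivially on the first factor), so Lemma \ref{lemma: diagonal invariants} yields an iso $X \times X_\alpha \cong X \times G_\alpha$ over $X$. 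The residual $N_\alpha \times 1$-action is then again diagonal, acting only on the first factor on both sides: for $X \times G_\alpha$ this uses the identity $\overline{h^{-1}g} = \bar g$ in $G_\alpha$ whenever $h \in N_\alpha$, which holds by normality of $N_\alpha$. A second application of Lemma \ref{lemma: diagonal invariants} then yields the desired torsor iso $X_\alpha \times G_\alpha \cong X_\alpha \times X_\alpha$.

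It remains to check that $X_\alpha$ is of finite type and that $C_G(X_\alpha) = N_\alpha$. The former follows from the torsor property and the finite-type-ness of $G_\alpha$ by fpqc descent (locally on $X_\alpha$, the torsor is trivial). The inclusion $N_\alpha \subseteq C_G(X_\alpha)$ is clear by construction, while the reverse inclusion follows because the free $G_\alpha$-action on the $G_\alpha$-torsor $X_\alpha$ has trivial kernel. The main technical obstacle is the two-stage invariants computation; the crucial point, and the place where normality of $N_\alpha$ is essential, is verifying that after quotienting by $1 \times N_\alpha$, the residual $N_\alpha \times 1$-action on $X \times G_\alpha$ becomes diagonal, which is precisely what makes Lemma \ref{lemma: diagonal invariants} applicable a second time.
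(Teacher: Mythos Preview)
Your proposal is correct and follows the same overall architecture as the paper: index by normal closed subgroups $N$ with algebraic quotient, set $X_N=\spec(k[X]^N)$, verify that $X_N$ is a $G/N$-torsor of finite type with $C_G(X_N)=N$, and show that $k[X]$ is the directed union of the $k[X]^N$. Your choice of indexing (finite-dimensional subcomodules $M_\alpha$ with $N_\alpha=\ker(G\to\Gl(M_\alpha))$) is a cofinal subsystem of the paper's (all normal $N$ with $G/N$ algebraic), and the argument that $k[X]=\bigcup_\alpha k[X]^{N_\alpha}$ via $M_\alpha\subseteq k[X]^{N_\alpha}$ is exactly the content the paper extracts from comodule theory.

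The one genuine difference is in how the torsor isomorphism $\psi_N\colon k[X]^N\otimes k[X]^N\to k[X]^N\otimes k[G]^N$ is established. The paper uses the diagonal $N\times N$-action $(x,g).(n_1,n_2)=(x.n_1,gn_2)$ on $X\times G$, for which $\psi$ is \emph{not} equivariant; it then checks surjectivity of $\psi_N$ by a direct functor-of-points computation, showing that $\psi(f)$ being $(N\times N)$-invariant forces $f$ to be so. Your approach instead twists the action on the source to $(h_1,h_2)\cdot(x,g)=(xh_1,h_1^{-1}gh_2)$, which makes $\mu$ genuinely $G\times G$-equivariant, and then takes $N\times N$-invariants in two diagonal stages via Lemma~\ref{lemma: diagonal invariants}; the key observation that $\overline{n^{-1}g}=\bar g$ in $G/N$ is precisely where normality enters, just as it does in the paper's computation $g^{-1}n_1gn_2\in N$. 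Your packaging is a bit cleaner and avoids the element chase; the paper's is more explicit. Both rest on the same lemma and the same use of normality, so the difference is one of presentation rather than substance.

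Two minor points you glossed over that the paper spells out: you should say a word about why the coaction restricts to $k[X]^{N_\alpha}\to k[X]^{N_\alpha}\otimes k[G]^{N_\alpha}$ (so that ``$X_\alpha$ is naturally a $G_\alpha$-space'' is justified), and your finite-type argument via fpqc descent needs the remark that $k[X_\alpha]\neq 0$ (it is a $k$-subalgebra of $k[X]\neq 0$) so that $X_\alpha\to\spec k$ is indeed an fpqc cover along which $X_\alpha$ trivialises.
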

\begin{proof}	
	If an abstract group $G$ acts (from the right) on a set $X$ such that $X$ is a $G$-torsor, then for any normal subgroup $N$ of $G$ the set $X/N$ of $N$-orbits in $X$ is a $G/N$-torsor under the action $X/N\times G/N\to X/N,\ (x.N,gN)\mapsto x.g.N$. This is the idea for the construction of the $X_i$'s. However, to avoid a discussion of the existence of $X/N$ (as an affine scheme) in our context, we will mainly work with the invariant rings.	
	
	Let $N$ be a normal closed subgroup of $G$ such that $G/N$ is algebraic (i.e., of finite type). Then $N$ acts (form the right) on $X$ and on $G$.	Let $\rho\colon k[X]\to k[X]\otimes k[G]$ be the dual of the action $X\times G\to X$. We claim that $\rho$ restricts to a map $k[X]^N\to k[X]^N\otimes k[G]^N$.

	We have a (right) action of $N\times N$ on $X\times G$ given by $(x,g).(n_1,n_2)=(x.n_1,gn_2)$ for $x\in X(R),\ g\in G(R),\ n_1,n_2\in N(R)$ and $R$ a $k$-algebra. According to Lemma \ref{lemma: diagonal invariants}, the invariants $k[X\times G]^{N\times N}$ with respect to this action are equal to $k[X]^N\otimes k[G]^N$. It thus suffices to show that $\rho$ maps an $N$-invariant $f\in k[X]$ to an $(N\times N)$-invariant, i.e., we have to show that $f(x.g)=f(x.n_1gn_2)$ for $n_1,n_2\in N(R)$, $x\in X(R')$, $g\in G(R')$, where $R$ is a $k$-algebra and $R'$ an $R$\=/algebra. But since $g^{-1}n_1gn_2\in N(R')$, we have $f(x.g)=f(x.g.g^{-1}n_1gn_2)=f(x.n_1gn_2)$ by the $N$-invariance of $f$.
%
%
%
	
		Thus $\rho$ restricts to a well-defined map $\rho_N\colon k[X]^N\to k[X]^N\otimes k[G]^N$.
	Setting $X_N=\spec(k[X]^N)$ we thus have an action $X_N\times G/N\to X_N$ of $G/N$ on $X_N$. We claim that $X_N$ is a $G/N$-torsor.
	
	The dual $\psi\colon k[X]\otimes k[X]\to k[X]\otimes k[G]$ of the isomorphism $X\times G\to X\times X,\ (x,g)\mapsto (x,x.g)$ is an isomorphism. Therefore, the dual $\psi_N\colon k[X]^N\otimes k[X]^N\to k[X]^N\otimes k[G]^N$ of $X_N\times G/N\to X_N\times X_N,\ (x,g)\mapsto (x,x.g)$ is at least injective.
	
	 To see that $\psi_N$ is surjective, we consider the $(N\times N)$-invariants on both sides of the isomorphism $\psi$ (Lemma \ref{lemma: diagonal invariants}). Note however, that $\psi$ is not $(N\times N)$-equivariant. 
	Anyhow, to show that $\psi_N$ is surjective, it suffices to show that $\psi(f)\in (k[X]\otimes k[G])^{N\times N}$ for $f\in k[X]\otimes k[X]$ implies $f\in (k[X]\otimes k[X])^{N\times N}$. But $\psi(f)\in (k[X]\otimes k[G])^{N\times N}$ means that 
	\begin{equation}\label{eq: f invariant}
		f(x,x.g)=\psi(f)(x,g)=\psi(f)(x.n_1,g.n_2)=f(x.n_1,x.n_1gn_2)
	\end{equation}
	for $n_1,n_2\in N(R)$, $x\in X(R')$, $g\in G(R')$, $R$ a $k$-algebra and $R'$ an $R$-algebra.
	
	Given a $k$-algebra $\widetilde{R}$, $\widetilde{n}_1, \widetilde{n}_2\in N(\widetilde{R})$, an $\widetilde{R}$-algebra $\widetilde{R}'$ and $x_1,x_2\in X(\widetilde{R}')$, we can write $x_2=x_1.\widetilde{g}$ for a unique $\widetilde{g}\in G(\widetilde{R}')$. 
	Then, using (\ref{eq: f invariant}) with $R=R'=\widetilde{R}'$, $x=x_1$, $g=\widetilde{g}$, $n_1=\widetilde{n}_1$,  $n_2=\widetilde{g}^{-1}\widetilde{n}_1^{-1}\widetilde{g}\widetilde{n}_2\in N(\widetilde{R}')$, we have $$f(x_1,x_2)=f(x_1,x_1.\widetilde{g})=f(x_1.\widetilde{n}_1, x_1.\widetilde{n}_1\widetilde{g}\widetilde{g}^{-1}\widetilde{n}_1^{-1}\widetilde{g}\widetilde{n}_2)=f(x_1\widetilde{n}_2,x_1.\widetilde{g}\widetilde{n}_2)=f(x_1.\widetilde{n}_1,x_2.\widetilde{n}_2).$$
	Thus $f\in (k[X]\otimes k[X])^{N\times N}=k[X]^N\otimes k[X]^N$ as desired and we can conclude that $X_N$ is a $G/N$-torsor. Since $G/N$ is algebraic, also $X_N$ has to be of finite type. Indeed, a $K$-point of $X_N$ in some field extension $K$ of $k$, yields an isomorphism of $K$-algebras between $k[G/N]\otimes K$ and $k[X_N]\otimes K$. As $k[G/N]$ is a finitely generated $k$-algebra we see that $k[G/N]\otimes K$ and therefore also $k[X_N]\otimes K$ are finitely generated $K$-algebras. But then $k[X_N]$ has to be a finitely generated $k$-algebra.
	
	We next show that $k[X]$ is the directed union of the $k[X]^N$'s. Because each $k[X]^N$ is a finitely generated $k$-algebra, it suffices to show that every finite subset $F$ of $k[X]$ is contained in some $k[X]^N$. 
	
	Note that $\rho\colon k[X]\to k[X]\otimes k[G]$ defines the structure of a (right) comodule on $k[X]$.  According to \cite[Theorem~3.3]{Waterhouse:IntroductiontoAffineGroupSchemes}, every comodule is the directed union of its finite dimensional (as $k$\=/vector space) comodules. So $F$ is contained in a finite dimension $k$-subspace $V$ of $k[X]$ such that $\rho(V)\subseteq V\otimes k[G]$. Let $A$ be the $k$-subalgebra of $k[X]$ generated by $V$. Then $A$ is finitely generated and $\rho(A)\subseteq A\otimes k[G]$. In fact, since $A$ is finitely generated, there exists a finitely generated $k$-subalgebra $B$ of $k[X]$ such that $\rho(A)\subseteq B\otimes A$. According to \cite[Section 3.3]{Waterhouse:IntroductiontoAffineGroupSchemes} every Hopf algebra is the directed union of Hopf subalgebras that are finitely generated as $k$-algebras. Thus $B$ is contained in some Hopf subalgebra 
	$B'$ of $k[G]$ that is finitely generated as a $k$-algebra. Any Hopf subalgebra of $k[G]$ is of the form $k[G/N]=k[G]^N$ for a normal closed subgroup $N$ of $G$ (\cite[Sections 15 and 16]{Waterhouse:IntroductiontoAffineGroupSchemes} or \cite[Theorem 4.3]{Takeuchi:ACorrespondenceBetweenHopfidealsAndSubHopfalgebras}). Thus $B'=k[G]^N$ for a normal closed subgroup $N$ of $G$ with $G/N$ algebraic. Moreover, $F\subseteq A$ and $\rho(A)\subseteq A\otimes k[G]^N$. It thus suffices to show that $A\subseteq k[X]^N$. For $f\in A\subseteq k[X]$ we have $\rho(f)\in k[X]\otimes k[G]^N$. This mean that for a $k$-algebra $R$, an $R$-algebra $R'$, $n\in N(R)$, $g\in G(R')$ and $x\in X(R')$ we have $f(x.g)=f(x.gn)$. 
	Choosing $g=1$, we see that $f\in k[X]^N$. So $A\subseteq k[X]^N$ as desired.
	
	Note that $k[X]^N\subseteq k[X]^{N'}$ if $N'\subseteq N$.
	Since $k[X]$ is the directed union of the $k[X]^N$'s, we see that $X=\varprojlim X_N$, where the projective limit is taken over the set of all closed normal subgroups $N$ of $G$ such that $G/N$ is algebraic. This index set is a directed set with respect to the partial order defined by $N\leq N'$ if $N'\subseteq N$.
	
	To finish the proof it remains to verify that $C_G(X_N)=N$. Since the action of $G$ on $X_N$ factors through $G/N$, surely $N\subseteq C_G(X_N)$. Conversely, if $R$ is a $k$-algebra and $g\in C_G(X_N)(R)$, then the image $\overline{g}$ of $g$ in $(G/N)(R)$ acts trivial on $X_N(R')$ for every $R$-algebra $R'$. Since $X_N$ is an $G/N$-torsor we must have $\overline{g}=1$, i.e., $g\in N(R)$. Thus $C_G(X_N)\subseteq N$ and consequently $C_G(X_N)=N$.	
\end{proof}

The following lemma introduces the orbit topology on $X(k)$, where $X$ is a $G$-space such that $X$ is a $G/C_G(X)$-torsor. This topology is similar to the topology on the $k$-points of an affine algebraic group discussed before Proposition 2.8 in \cite{HochschildMostow:RepresentationsAndRepresentativeFunctionsOfLieGroups}.

\begin{lemma} \label{lemma: coset topology}
	Assume that $k$ is algebraically closed and let $G$ be an affine group scheme.
	
	\begin{enumerate}
		\item  Let $X$ be an affine $G$-space of finite type such that $X$ is a $G/C_G(X)$-torsor. Then the subsets of $X(k)$ that are finite unions of orbits of the form $x.H(k)$ with $x\in X(k)$ and $H$ a closed subgroup of $G$, are the closed subsets for a topology on $X(k)$; the \emph{orbit topology}. With respect to the orbit topology $X(k)$ is a compact T1 space.  
		\item Let $\f\colon X_2\to X_1$ be a morphism of affine $G$-spaces of finite type such that $X_i$ is a $G/C_G(X_i)$-torsor ($i=1,2$). Then the map $\f_k\colon X_2(k)\to X_1(k)$ is continuous and closed with respect to the orbit topologies.
	\end{enumerate}
\end{lemma}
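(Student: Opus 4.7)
The plan is to identify $X$ with the algebraic quotient $Q:=G/C_G(X)$: since $X$ is a $Q$-torsor and $Q$ is of finite type over the algebraically closed field $k$, we have $X(k)\neq\emptyset$, and any $x_0\in X(k)$ yields a $G$-equivariant isomorphism $X\cong Q$ sending $x_0\mapsto e$. Interpreting $x.H$ as the scheme-theoretic image of the orbit map $H\to X$, $h\mapsto x.h$, this becomes under the identification the coset $x\cdot\pi(H)\subseteq Q$, where $\pi\colon G\to Q$ is the quotient map and $\pi(H)$ is a closed (algebraic) subgroup of $Q$. So the closed sets in the orbit topology are precisely the $k$-points of finite unions of cosets of closed subgroups of the algebraic group $Q$.

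For part (i), the topology axioms then follow cleanly: empty set and finite unions are immediate; $T_1$ holds because $\{x\}=x.\{e\}(k)$; and $X(k)=x_0.G(k)$ is closed because $\pi(G)=Q$. For intersections, the intersection of two cosets is either empty or, picking any $y$ in it, a coset $y\cdot (K_1\cap K_2)$, again a coset of a closed subgroup of $Q$; this gives closure under finite intersection after distributing over finite unions, and Noetherianness of $Q$ as a scheme ensures arbitrary descending intersections reduce to finite ones. For compactness, the same Noetherian argument shows the orbit topology is itself a Noetherian topological space, and hence compact.

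For part (ii), I first verify that $C_G(X_2)\subseteq C_G(X_1)$: the scheme-theoretic image $\phi(X_2)$ is a nonempty $G$-invariant closed subscheme of the torsor $X_1$, hence all of $X_1$, so $C_G(X_2)$ (which fixes $X_2$) fixes all of $X_1$. Under the identifications $X_i\cong G/C_G(X_i)$, $\phi$ becomes the canonical projection $G/C_G(X_2)\to G/C_G(X_1)$. Closedness of $\phi_k$ is immediate from $G$-equivariance via $\phi_k(x.H(k))=\phi(x).H(k)$, so images of orbits are orbits. For continuity, $\phi_k$ is surjective (projection of algebraic groups over $k=\overline{k}$), so for any $z\in\phi_k^{-1}(y)$ the preimage of $y.H(k)$ equals $z.(H\cdot C_G(X_1))(k)$: the fiber $\phi_k^{-1}(y)$ is the coset $z.C_G(X_1)(k)$ of the algebraic subgroup $C_G(X_1)/C_G(X_2)\subseteq G/C_G(X_2)$, and $H\cdot C_G(X_1)$ is closed in $G$ because $C_G(X_1)$ is normal. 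The main obstacle I expect is insisting on the scheme-theoretic interpretation of $x.H(k)$ throughout (since $G$ is not algebraic, $H(k)\to\pi(H)(k)$ need not be surjective for non-algebraic $H$), which is what is needed to make the intersection, image, and preimage identities above equalities rather than mere containments.
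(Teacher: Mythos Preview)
Your approach is essentially the same as the paper's: identify $X$ with $G'=G/C_G(X)$ via a chosen basepoint, recognize the orbits as cosets of closed subgroups of the algebraic group $G'$, use Zariski Noetherianness to reduce arbitrary intersections to finite ones, and handle (ii) via $G$-equivariance and the replacement $H\rightsquigarrow H\cdot C_G(X_1)$.

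The one genuine gap is exactly the point you flag at the end. You are right that this is the crux, but you are wrong that it is an obstacle, and trying to dodge it via a scheme-theoretic reinterpretation of $x.H(k)$ is both unnecessary and not what the lemma asserts. The statement really does mean the set-theoretic orbit $\{x.h:h\in H(k)\}$, and the fact you need is that $H(k)\to\pi(H)(k)$ \emph{is} surjective. The paper invokes \cite[Chapter~III, Cor.~7.6]{DemazureGabriel:GroupesAlgebriques}: for a quotient map of affine group schemes over an algebraically closed field, the induced map on $k$-points is surjective. This is used twice in the paper's proof, once to get $x.H(k)=x.H'(k)$ (so orbits are Zariski closed and the intersection computation goes through), and once to get surjectivity of $G(k)\to (G/C_G(X_1))(k)$ (so $G(k)$ acts transitively on $X_1(k)$ and $\phi_k$ is surjective). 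Once you have this fact, your two interpretations of $x.H(k)$ coincide and all your ``mere containments'' become equalities.

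A small additional remark on (ii): rather than first proving $C_G(X_2)\subseteq C_G(X_1)$ and identifying $\phi$ with the canonical projection, the paper argues directly with $G(k)$-equivariance. Your preimage formula $\phi_k^{-1}(y.H(k))=z.(H\cdot C_G(X_1))(k)$ is correct and is exactly what the paper obtains (phrased there as: replace $H$ by its saturation so that $C_G(X_1)\leq H$, then $\phi_k^{-1}(y.H(k))=z.H(k)$).
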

\begin{proof}
	For (i), we first show that an orbit of the form $x.H(k)$ with $x\in X(k)$ and $H$ a closed subgroup of $G$ is a closed subset of $X(k)$ with respect to the Zariski topology.
	
	Set $G'=G/C_G(X)$ and let $H'$ denote the image of $H$ in $G'$. Then $H\to H'$ is a quotient map and by \cite[Chapter III, Cor. 7.6]{DemazureGabriel:GroupesAlgebriques} the map $H(k)\to H'(k)$ is surjective. Thus $x.H(k)=x.H'(k)$. As $X$ is a $G'$-torsor, the morphism $G'\to X,\ g'\mapsto x.g'$ is an isomorphism. In particular, $G'(k)\to X(k)$ is a homeomorphism mapping the closed subset $H'(k)$ to the closed subset $x.H'(k)$. So $x.H(k)$ is closed with respect to the Zariski topology and so is any finite union of such orbits.
	
	Since $X$ is of finite type, any descending chain of Zariski closed subsets of $X$ is finite. Thus an arbitrary intersection of finite unions of orbits is in fact a finite intersection of finite unions of orbits. 
	Therefore, to show that an arbitrary intersection of finite unions of orbits is itself a finite union of orbits, it suffices to show that the intersection of two orbits is again an orbit. So let $H_1,H_2$ be closed subgroups of $G$ and $x_1, x_2\in X(k)$. If $(x_1.H_1(k))\cap (x_2.H_2(k))$ is non-empty, then there exists an $x\in X(k)$ such that $x_1H_1(k)=xH_1(k)$ and $x_2H_2(k)=x H_2(k)$. Moreover, as noted above, we have $xH_1(k)=xH_1'(k)$ and $xH_2'(k)$ with $H_i'$ the image of $H_i$ in $G'$. Then
	$$(x_1.H_1(k))\cap (x_2.H_2(k))=(x.H'_1(k))\cap (x.H'_2(k)=x.(H'_1(k)\cap H'_2(k))=x.(H_1'\cap H_2')(k), $$
	where the second equality uses that $G'(k)$ acts freely on $X(k)$. Thus, if $H\leq G$ denotes the inverse image of $H_1'\cap H_2'\leq G'$ under the quotient map $G\to G'$, then  $(x_1.H_1(k))\cap (x_2.H_2(k))=x.H(k)$.
	
	Therefore the finite unions of orbits are indeed the closed sets of a topology on $X(k)$. As noted above, a subset of $X(k)$ that is closed with respect to the orbit topology is closed with respect to the Zariksi topology. In particular, any descending chain of closed subsets with respect to the orbit topology is finite. Hence $X(k)$ is compact with respect to the orbit topolgy. The points of $X(k)$ are closed with respect to the orbit topology because they are the orbits of the trivial subgroup $H=1$ of $G$. This concludes the proof of (i).
	
	For (ii), we first show that $\f_k\colon X_2(k)\to X_1(k)$ is surjective. 
	Let $x_1\in X_1(k)$. The group $G(k)$ acts transitively on $X_1(k)$ because $G(k)\to (G/C_G(X_1))(k)$ is surjective (again by \cite[Chapter III, Cor. 7.6]{DemazureGabriel:GroupesAlgebriques}) and $X_1(k)$ is a $(G/C_G(X_1))(k)$-torsor. Thus, if $x_2$ is any element of $X_2(k)$, there exists a $g\in G(k)$ such that $x_1=\f_k(x_2).g=\f_k(x_2.g)$. Hence $\f_k$ is surjective.
	
	To show that $\f_k$ is continuous with respect to the orbit topologies, it suffices to show that the inverse image of an orbit is on orbit. So let $H$ be a closed subgroup of $G$ and $x_1\in X_1(k)$. We would like to show that $\f_k^{-1}(x_1.H(k))$ is an orbit. As noted in the proof of (i), we have $x_1.H(k)=x_1.H'(k)$, where $H'$ denotes the image of $H$ in $G/C_G(X_1)$. In other words, we may assume that $C_G(X_1)\leq H$.
	Since $\f_k$ is surjective, there exists an $x_2\in X_2(k)$ such that $\f_k(x_2)=x_1$. We claim that $\f_k^{-1}(x_1.H(k))=x_2.H(k)$. Clearly, $x_2.H(k)\subseteq \f_k^{-1}(x_1.H(k))$. For the reverse inclusion, let $x_2'\in \f_k^{-1}(x_1.H(k))$. Since $G(k)$ acts transitively on $X_2(k)$, there exists a $g\in G(k)$ such that $x'_2=x_2.g$. Then $$x_1.g=\f_k(x_2).g=\f_k(x_2.g)=\f_k(x'_2)\in x_1.H(k).$$
	Hence there exists an $h\in H(k)$ such that $x_1.g=x_1.h$. Since $X_1(k)$ is a $(G/C_G(X_1))(k)$-torsor, we have $gh^{-1}\in C_G(X_1)(k)$. But $C_G(X_1)\leq H$ and so $g\in H(k)$. Thus $x_2'=x_2.g\in x_2.H(k)$. Therefore $\f_k^{-1}(x_1.H(k))=x_2.H(k)$ and $\f_k$ is continuous with respect to the orbit topologies. 
	
	To see that $\f_k$ is closed with respect to the orbit topologies, it suffices to see that $\f_k$ preserves orbits. But this follows immediately from the $G(k)$-equivariance of $\f_k$.
\end{proof}


We are now prepared to prove our main results.

\begin{proof}[Proof of Theorem \ref{theo: main intro}]
	We first note that $X$ is an affine scheme. Indeed, $X$ and $G$ become isomorphic over some field extension $K$ of $k$. So $X_K$ is an affine scheme. By faithfully flat descent, $X$ is an affine scheme (\cite[Expos\'{e} VIII, Cor. 5.6]{Grothendieck:SGA1}).

	By Proposition \ref{prop: torsor is projective limit} we may write $X$ as a projective limit $X=\varprojlim_{i\in I} X_i$ of affine $G$-spaces $X_i$ of finite type such that each $X_i$ is a $G/C_G(X_i)$-torsor.
	In particular, $X(k)=\varprojlim_{i\in I} X_i(k)$.
	
	By Lemma \ref{lemma: coset topology} each $X_i(k)$ is a compact T1 space with respect to the orbit topology. Moreover, the transition maps $X_j(k)\to X_i(k)$ ($j\geq i$) are continuous and closed with respect to the orbit topologies. Thus Lemma \ref{lemma: projective limit nonempty} applied to the projective system of the $X_i(k)$'s equipped with the orbit topology shows that $X(k)$ is non-empty.
\end{proof}

\begin{proof}[Proof of Corollary \ref{cor: intro uniquenes of fibre functor}]
	Let $\omega_1,\omega_2$ be two neutral fibre functors on a neutral tannakian category over an algebraically closed field $k$. Then $G=\underline{\Aut}^\otimes(\omega_1)$ is an affine group scheme and $\underline{\operatorname{Isom}}(\omega_1,\omega_2)$ is a $G$-torsor  (\cite[Theorem 3.2]{DeligneMilne:TannakianCategories}). By Theorem \ref{theo: main intro} the $G$-torsor $\underline{\operatorname{Isom}}(\omega_1,\omega_2)$ has a $k$-point, i.e., $\omega_1$ and $\omega_2$ are isomorphic.
\end{proof}

\bibliographystyle{alpha}
 \bibliography{bibdata}
\end{document}